

\documentclass[envcountsame,runningheads,a4paper]{llncs}
\pdfoutput=1

  \pdfpageattr{/Group <</S /Transparency /I true /CS /DeviceRGB>>}

\usepackage[T1]{fontenc}
\usepackage[english]{babel}
\usepackage{hyperref}
\usepackage{geometry}
\usepackage{pdflscape}
\usepackage{amsfonts,amsmath,amssymb}
\usepackage{tikz}
\usepackage{xspace}
\usepackage{booktabs}
\usepackage{paralist}
\usepackage{dcolumn}
\usepackage{listings}
\usepackage{marvosym}
\usepackage{microtype}

\usepackage[vlined, ruled, oldcommands]{algorithm2e}

\microtypesetup{tracking,kerning,spacing}
\microtypecontext{spacing=nonfrench}

\usetikzlibrary{calc, 3d}

\usetikzlibrary{decorations.markings}

\newcommand{\Gitter}[4]{
    \draw[very thin,color=gray] (#1,#3) grid (#2,#4);
}

\newcommand{\Koordinatenkreuz}[6]{
    \draw[->, >=latex, color=black!50, thick] (#1,0) -- (#2,0) node[right] {#5};
    \draw[->, >=latex, color=black!50, thick] (0,#3) -- (0,#4) node[left] {#6};
}

\definecolor{links}{rgb}{.2,.1,.5}
\definecolor{cites}{rgb}{.5,.1,.2}
\hypersetup{colorlinks,linkcolor=links,citecolor=cites,urlcolor=links}


\newcolumntype{d}[1]{D{.}{.}{#1} }



\newcommand\polymake{{\tt polymake}\xspace}



\newcommand\perl{{\tt Perl}\xspace}
\newcommand\cplusplus{{\tt C++}\xspace}


\newcommand\FF{{\mathbb F}}
\newcommand\NN{{\mathbb N}}

\newcommand\QQ{{\mathbb Q}}
\newcommand\RR{{\mathbb R}}

\newcommand\TT{{\mathbb T}}

\newcommand\SetOf[2]{\left\{\left.#1\vphantom{#2}\ \right|\ #2\vphantom{#1}\right\}}
\newcommand\smallSetOf[2]{\{{#1}\,|\,{#2}\}}

\DeclareMathOperator{\val}{val}

\DeclareMathOperator{\tcone}{tcone}
\DeclareMathOperator{\tdet}{tdet}
\DeclareMathOperator{\sgn}{sgn}
\DeclareMathOperator{\LP}{LP}
\newcommand\puiseux[2]{#1\{#2\}} 
\newcommand\pseries[2]{#1\{\!\!\{#2\}\!\!\}} 

\newcommand\transpose[1]{{#1}^\top}


\lstset{
language={},
xleftmargin=2em,
xrightmargin=2em,
basicstyle=\footnotesize\ttfamily,
identifierstyle=,
keywordstyle=\bfseries,
commentstyle=\color{gray},
moredelim=[is][\color{gray}]{||}{||},
stringstyle=\ttfamily,
breaklines=true,
sensitive=true,
columns=fullflexible,
numbers=none, numberstyle=\tiny, frame=none, fontadjust=false, tabsize=4,
morekeywords={},
otherkeywords={polytope>,ideal>},
keywordstyle={\color{black!80}\bfseries},
string=[b]{"},
morestring=[b]{'},
stringstyle={\color{gray}},
classoffset=2,
morekeywords={Matrix, Vector, GeometricSimplicialComplex, Polynomial, Array, Ideal, Ring, UniMonomial, PuiseuxFraction},
keywordstyle={\color{black!70}\bfseries},
classoffset=3,
morekeywords={LATTICE_POINTS, COORDINATES, FOLDABLE, SIGNATURE, GENERATORS, GROEBNER, ORDER, BASIS, monomials_as_matrix, coefficients_as_vector, RING, SOLVE, minor, triangulation, FACETS, VERTICES, N_FACETS, N_POINTS, MAXIMAL_VALUE, MAXIMAL_VERTEX, N_VERTICES, LINEAR_OBJECTIVE, N_LATTICE_POINTS, LP, VOLUME},
keywordstyle={\color{black!50}\bfseries},
classoffset=4,
morekeywords={print, regular_subdivision, wronski_center_ideal, wronski_system, join, knapsack, integer_hull, rhombicosidodecahedron,goldfarb_sit},
keywordstyle={\color{black!70}\bfseries\slshape},
classoffset=5,
morekeywords={Rational, Int,Min,Max},
keywordstyle={\color{blue!60!black}\bfseries}
}

\usepackage[colorinlistoftodos,bordercolor=orange,backgroundcolor=orange!20,linecolor=orange,textsize=scriptsize]{todonotes}

\title{Linear programs and convex hulls\\ over fields of Puiseux fractions}
\titlerunning{Linear programs and convex hulls over fields of Puiseux fractions}

\author{Michael Joswig\thanks{Partially supported by Einstein Foundation Berlin and Deutsche Forschungsgemeinschaft (DFG) within the Priority Program 1489 ``Experimental Methods in Algebra, Geometry and Number Theory''.} 
\and Georg Loho \and Benjamin Lorenz \and Benjamin Schr\"oter}
\authorrunning{Joswig, Loho, Lorenz, Schr\"oter}

\institute{
Institut f{\"u}r Mathematik,
 TU Berlin, MA 6-2,\\
 Str.\ des 17. Juni 136, 10623 Berlin, Germany\\
\email{\href{mailto:<lastname>@math.tu-berlin.de}{<joswig, loho, lorenz, schroeter>@math.tu-berlin.de}}
}


\begin{document}
\maketitle
\begin{abstract}
  We describe the implementation of a subfield of the field of formal Puiseux series in \polymake.  This is employed for
  solving linear programs and computing convex hulls depending on a real parameter.  Moreover, this approach is also useful for
  computations in tropical geometry.
\end{abstract}

\section{Introduction}
\noindent
It is well known and not difficult to see that the standard concepts from linear programming (LP), e.g., the Farkas
Lemma and LP duality, carry over to an arbitrary ordered field; e.g., see \cite[Section~II]{CharnesKortanek:1970} or
\cite[\S2.1]{Jeroslow:1973b}.  Traces of this can already be found in Dantzig's monograph~\cite[Chapter
22]{Dantzig:1963}.  This entails that any algorithm whose correctness rests on these LP corner stones is valid over any
ordered field.  In particular, this holds for the simplex method and usual convex hull algorithms.  A classical
construction, due to Hilbert, turns a field of rational functions, e.g., with real coefficients, into an ordered field;
see \cite[\S 147]{vanderWaerden:Algebra2}.  In \cite{Jeroslow:1973b} Jeroslow discussed these fields in the context of
linear programming in order to provide a rigorous foundation of the so-called ``big M method''.  The purpose of this
note is to describe the implementation of the simplex method and of a convex hull algorithm over fields of this kind in
the open source software system \polymake~\cite{DMV:polymake}.

Hilbert's ordered field of rational functions is a subfield of the field of formal Puiseux series $\pseries{\RR}{t}$
with real coefficients.  The latter field is real-closed by the Artin--Schreier
Theorem \cite[Theorem~12.10]{SGHL:2007}; by Tarski's Principle (cf. \cite{Tarski:1948}) this implies that
$\pseries{\RR}{t}$ has the same first order properties as the reals.  The study of polyhedra over $\pseries{\RR}{t}$ is
motivated by tropical geometry \cite{DevelinYu:2007}, especially tropical linear programming~\cite{AlBeGaJo:2015}.  The
connection of the latter with classical linear programming has recently lead to a counter-example \cite{ABGJ:1405.4161}
to a ``continuous analogue of the Hirsch conjecture'' by Deza, Terlaky and Zinchenko \cite{DezaTerlakyZinchenko:2009}.
In terms of parameterized linear optimization (and similarly for the convex hull computations) our approach amounts to
computing with sufficiently large (or, dually, sufficiently small) positive real numbers.  Here we do \emph{not} consider the
more general algorithmic problem of stratifying the parameter space to describe all optimal solutions of a linear
program for \emph{all} choices of parameters; see, e.g., \cite{JCM:2008} for work into that direction.

This paper is organized as follows. We start out with summarizing known facts on ordered fields.  Then we describe a
specific field, $\puiseux{\QQ}{t}$, which is the field of rational functions with rational coefficients and rational
exponents.  This is a subfield of $\pseries{\QQ}{t}$, which we call the field of \emph{Puiseux fractions}.  It is our
opinion that this is a subfield of the formal Puiseux series which is particularly well suited for exact computations
with (some) Puiseux series; see \cite{ManaaCoquand:2013} for an entirely different approach.  In the context of tropical
geometry Markwig~\cite{Markwig:2010} constructed a much larger field, which contains the classical Puiseux series as a proper
subfield.  For our applications it is relevant to study the evaluation of Puiseux fractions at sufficiently large
rational numbers.  In Section~\ref{sec:polyhedra} we develop what this yields for comparing convex polyhedra over
$\pseries{\RR}{t}$ with ordinary convex polyhedra over the reals.  The tropical geometry point of view enters the
picture in Section~\ref{sec:tropical}.  We give an algorithm for solving the dual tropical convex hull problem, i.e.,
the computation of generators of a tropical cone from an exterior description.  Allamigeon, Gaubert and Goubault gave a
combinatorial algorithm for this in~\cite{AllamigeonGaubertGoubault:2013}, while we use a classical (dual) convex hull
algorithm and apply the valuation map.  The benefit of our approach is more geometric than in terms of computational
complexity: in this way we will be able to study the fibers of the tropicalization map for classical versus tropical
cones for specific examples.  Section~\ref{sec:implementation} sketches the \polymake implementation of the Puiseux
fraction arithmetic and the LP and convex hull algorithms.  The LP solver is a dual simplex algorithm with steepest edge
pivoting, and the convex hull algorithm is the classical beneath-and-beyond method \cite{Edelsbrunner:1987} \cite{Joswig:2003}.  An overview with computational
results is given in Section~\ref{sec:computations}.

\subsubsection*{Acknowledgment}

We thank Thomas Opfer for contributing to and maintaining within the \polymake project his implementation of the dual
simplex method, originally written for his Master's Thesis~\cite{ThomasDipl}.

\section{Ordered fields and rational functions}\label{sec:puiseux}
\noindent
A field $\FF$ is \emph{ordered} if there is a total ordering $\leq$ on the set $\FF$ such that for all $a,b,c\in\FF$ the
following conditions hold:
\begin{compactenum}[(i)]
\item if $a \leq b$ then $a + c \leq b + c$,
\item if $0 \leq a$ and $0\leq b$ then $0 \leq a \cdot b$.
\end{compactenum}
Any ordered field necessarily has characteristic zero.  Examples include the rational numbers $\QQ$, the reals $\RR$ and
any subfield in between.

Given an ordered field $\FF$ we can look at the ring of univariate polynomials $\FF[t]$ and its quotient field $\FF(t)$,
the field of rational functions in the indeterminate $t$ with coefficients in~$\FF$.  On the ring $\FF[t]$ we obtain
a total ordering by declaring $p < q$ whenever the leading coefficient of $q-p$ is a positive element in $\FF$.
Extending this ordering to the quotient field by letting
\[
\frac{u}{v} < \frac{p}{q} \ :\iff \ u q < v p \enspace ,
\]
where the denominators $v$ and $q$ are assumed positive, turns $\FF(t)$ into an ordered field; see, e.g.,
\cite[\S147]{vanderWaerden:Algebra2}.  This ordered field is called the ``Hilbert field'' by Jeroslow \cite{Jeroslow:1973b}.

By definition, the exponents of the polynomials in $\FF[t]$ are natural numbers.  However, conceptually, there is no
harm in also taking negative integers or even arbitrary rational numbers as exponents into account, as this can be
reduced to the former by clearing denominators and subsequent substitution.  For example,
\begin{equation}\label{eq:example_fraction}
  \frac{2t^{3/2}-t^{-1}}{1+3t^{-1/3}} \ = \ \frac{2t^{5/2}-1}{t+3t^{2/3}} \ = \ \frac{2s^{15}-1}{s^6+3s^4} \enspace ,
\end{equation}
where $s=t^{1/6}$.  In this way that fraction is written as an element in the field $\QQ(t^{1/6})$ of rational functions
in the indeterminate $s=t^{1/6}$ with rational coefficients.  Further, if $p\in\FF(t^{1/\alpha})$ and
$q\in\FF(t^{1/\beta})$, for natural numbers $\alpha$ and $\beta$, then the sum $p+q$ and the product $p \cdot q$ are
contained in $\FF(t^{1/\gcd(\alpha,\beta)})$.  This shows that the union
\begin{equation}
  \puiseux{\FF}{t} \ = \ \bigcup_{\nu \geq 1} \FF(t^{1/\nu})
\end{equation}
is again an ordered field.  We call its elements \emph{Puiseux fractions}.  The field $\puiseux{\FF}{t}$ is a subfield
of the field $\pseries{\FF}{t}$ of \emph{formal Puiseux series}, i.e., the formal power series with rational exponents
of common denominator.  For an algorithmic approach to general Puiseux series see \cite{ManaaCoquand:2013}.

The map $\val$ which sends the rational function $p/q$, where $p,q\in\FF[t^{1/\nu}]$, to the number $\deg_tp-\deg_tq$
defines a non-Archimedean valuation on $\FF(t)$.  Here we let $\val(0)=\infty$.  As usual the \emph{degree} is the
largest occurring exponent.  The valuation map extends to Puiseux series.  More precisely, for $f,g\in \puiseux{\FF}{t}$
we have the following:
\begin{compactenum}[(i)]
\item $\val(f\cdot g) = \val(f)+\val(g)$,
\item $\val(f+g) \leq \max(\val(f),\val(g))$.
\end{compactenum}

If $\FF=\RR$ is the field of real numbers we can evaluate a Puiseux fraction $f\in\puiseux{\RR}{t}$ at a real number
$\tau$ to obtain the real number $f(\tau)$.  This map is defined for all $\tau>0$ except for the finitely many poles,
i.e., zeros of the denominator.  Restricting the evaluation to positive numbers is necessary since we are allowing
rational exponents.  The valuation map satisfies the equation
\begin{equation}\label{eq:limit}
  \lim_{\tau\to\infty} \log_{\tau} |f(\tau)| \ = \ \val(f) \enspace .
\end{equation}
That is, seen on a logarithmic scale, taking the valuation of $f$ corresponds to interpreting $t$ like an
infinitesimally large number.  Reading the valuation map in terms of the limit \eqref{eq:limit} is known as
\emph{Maslov dequantization}, see \cite{Maslov:1986}.

Occasionally, it is also useful to be able to interpret $t$ as a \emph{small} infinitesimal.  To this end, one can
define the \emph{dual degree} $\deg^*$, which is the smallest occurring exponent.  This gives rise to the \emph{dual valuation}
map $\val^*(p/q)=\deg^*_tp-\deg^*_tq$ which yields
\[
\val^*(f+g) \ \geq \ \min(\val^*(f),\val^*(g)) \quad \text{and} \quad \lim_{\tau\to 0} \log_{\tau} |f(\tau)| \ = \ \val^*(f) \enspace .
\]
Changing from the primal to the dual valuation is tantamount to substituting $t$ by $t^{-1}$.

\begin{remark}
  The valuation theory literature often employs the dual definition of a valuation.  The equation~\eqref{eq:limit} is
  the reason why we usually prefer to work with the primal.
\end{remark}

Up to isomorphism of valuated fields the valuation on the field $\FF(t)$ of rational functions is unique, e.g., see
\cite[\S 147]{vanderWaerden:Algebra2}.  As a consequence the valuation on the slightly larger field of Puiseux fractions
is unique, too.

To close this section let us look at the algorithmically most relevant case $\FF=\QQ$.  Then, in general, the evaluation
map sends positive rationals to not necessarily rational numbers, again due to fractional exponents.  By clearing
denominators in the exponents one can see that evaluating at $\sigma>0$ ends up in the totally real number field
$\QQ(\sqrt[\nu]{\sigma})$ for some positive integer $\nu$.  For instance, evaluating the Puiseux fraction from
Example~\eqref{eq:example_fraction} would give an element of $\QQ(\sqrt[6]{\sigma})$.

\goodbreak

\section{Parameterized Polyhedra}\label{sec:polyhedra}
\noindent
Consider a matrix $A \in \puiseux{\FF}{t}^{m \times (d+1)}$.  Then the set
\[
C \ := \ \SetOf{x\in\puiseux{\FF}{t}^{d+1}}{A\cdot x \geq 0}
\]
is a polyhedral cone in the vector space $\puiseux{\FF}{t}^{d+1}$.  Equivalently, $C$ is the set of feasible solutions
of a linear program with $d+1$ variables over the ordered field $\puiseux{\FF}{t}$ with $m$ homogeneous constraints, the rows of $A$.
The Farkas--Minkowski--Weyl Theorem establishes that each polyhedral cone is finitely generated.  A proof for this
result on polyhedral cones over the reals can be found in \cite[\S1.3 and \S1.4]{Ziegler:1995} under the name ''Main theorem for cones''.
It is immediate to verify that the arguments given hold over any ordered field.  Therefore, there is a matrix $B\in
\puiseux{\FF}{t}^{(d+1)\times n}$, for some $n\in\NN$, such that
\begin{equation}\label{eq:B}
  C \ = \ \SetOf{B\cdot a}{a\in\puiseux{\FF}{t}^n,\, a\geq 0} \enspace .
\end{equation}
The columns of $B$ are points and the cone $C$ is the non-negative linear span of those.

Let $L$ be the \emph{lineality space} of $C$, i.e., $L$ is the unique maximal linear subspace of
$\puiseux{\FF}{t}^{d+1}$ which is contained in $C$.  If $\dim L=0$ the cone $C$ is \emph{pointed}.  Otherwise, the set
$C/L$ is a pointed polyhedral cone in the quotient space $\puiseux{\FF}{t}^{d+1}/L$.  A \emph{face} of $C$ is the
intersection of $C$ with a supporting hyperplane.  The faces are partially ordered by inclusion.
Each face contains the
lineality space.  Adding the entire cone $C$ as an additional top element we obtain a lattice, the \emph{face lattice}
of $C$.  The maximal proper faces are the \emph{facets} which form the co-atoms in the face lattice. 
The \emph{combinatorial type} of $C$ is the isomorphism class of the face lattice (e.g., as a partially ordered
set).  Notice that our definition says that each cone is combinatorially equivalent to its quotient modulo its lineality
space.

Picking a positive element $\tau$ yields matrices $A(\tau)\in\FF^{m\times(d+1)}$ and $B(\tau)\in\FF^{(d+1)\times n}$ as
well as a polyhedral cone $C(\tau) = \smallSetOf{x\in\FF^{d+1}}{A(\tau)\cdot x \geq 0}$ by evaluating the Puiseux
fractions at the parameter $\tau$.  Here and below we will assume that $\tau$ avoids the at most finitely many poles of
the $(m+n)\cdot(d+1)$ coefficients of $A$ and $B$.

\begin{theorem}\label{thm:evaluation}
  There is a positive element $\tau_0\in\FF$ so that for every $\tau > \tau_0$ we have
  \[
  C(\tau) \ = \ \SetOf{B(\tau)\cdot\alpha}{\alpha\in\FF^n,\,\alpha\geq 0} \enspace ,
  \]
  and evaluating at $\tau$ maps the lineality space of $C$ to the lineality space of $C(\tau)$.  Moreover, the
  polyhedral cones $C$ and $C(\tau)$ over $\puiseux{\FF}{t}$ and $\FF$, respectively, share the same combinatorial
  type. 
\end{theorem}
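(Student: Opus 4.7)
The plan is to reduce the statement to preserving signs of a finite set of Puiseux fractions under evaluation $t \mapsto \tau$. The key observation is that any nonzero $f \in \puiseux{\FF}{t}$, after clearing denominators in the exponents so that $f = p/q$ with $p,q \in \FF[t^{1/\nu}]$, has its sign in the field ordering equal to the sign of the product of the leading coefficients of $p$ and $q$. Since $f(\tau)$ is dominated by its leading term as $\tau \to \infty$, there is a threshold $\tau_f > 0$ with $\sgn(f(\tau)) = \sgn(f)$ for all $\tau > \tau_f$.

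I would then take $\mathcal{F}$ to be the finite set of Puiseux fractions consisting of (i) all entries of $A \cdot B$, and (ii) all $k \times k$ subdeterminants of $A$ and of $B$, for $k = 1,\ldots,d+1$; and set $\tau_0 := \max\{\tau_f : f \in \mathcal{F},\, f \neq 0\}$. For every $\tau > \tau_0$ the sign and zero pattern of $A(\tau) B(\tau)$ matches that of $AB$, so $A(\tau) B(\tau) \geq 0$ and the facet--generator incidence matrix is preserved; moreover, $\operatorname{rank}$ of any submatrix of $A$ or of $B$ is preserved by evaluation, since it is encoded in the vanishing pattern of the minors in $\mathcal{F}$.

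From this the three claims would follow in order. The inclusion $\{B(\tau)\alpha : \alpha \geq 0\} \subseteq C(\tau)$ is immediate from $A(\tau) B(\tau) \geq 0$. For the lineality claim, $L = \ker A$, so $\dim \ker A(\tau) = \dim L$, and any Puiseux basis of $L$ evaluates, since ranks are preserved, to a linearly independent tuple in $\ker A(\tau)$, hence a basis of the lineality space of $C(\tau)$. For the reverse inclusion and the combinatorial type, I would use that a column $B_j$ spans an extreme ray of $C$ modulo $L$ precisely when the rows of $A$ with $A_i B_j = 0$ have rank $d - \dim L$; this characterization uses only incidences and ranks, both of which are preserved. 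The Minkowski--Weyl direction of Farkas--Minkowski--Weyl over $\FF$ then writes $C(\tau)$ as the sum of its lineality space with the cone on its extreme rays, all of which are covered by the evaluated columns of $B$.

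The chief obstacle will be making ``extreme ray of $C$ evaluates to extreme ray of $C(\tau)$'' fully rigorous: this comes down to verifying that the classical description of faces of a polyhedral cone in terms of active supporting hyperplanes admits a purely combinatorial and linear-algebraic encoding---namely the zero entries of $AB$ together with rank data on submatrices of $A$---which is exactly the data that $\mathcal{F}$ was engineered to preserve. Once that is verified, the face lattices of $C$ and $C(\tau)$ are built from the same input, and equality of combinatorial type follows.
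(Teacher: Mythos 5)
Your strategy --- reducing everything to the preservation of signs of a finite collection of Puiseux fractions under evaluation at large $\tau$ --- is exactly the mechanism the paper uses, so the overall approach is sound. The paper's bookkeeping is slightly different: from each $d$-element set $S$ of linearly independent rows of $A$ it takes a pair $\pm z_S$ of generators of $\ker A_S$, collects them into a matrix $Z$, and controls the signs of all scalar products $A_j\cdot Z_k$ (the chirotope of the arrangement). Your $\mathcal{F}$ (entries of $AB$, plus all minors of $A$ and $B$) carries equivalent information via Cramer's rule: the cofactor generator of $\ker A_S$ has coordinates that are $\pm\,(d\times d)$ minors of $A$, and $A_i\cdot z_S$ equals $\pm$ a $(d{+}1)\times(d{+}1)$ minor of $A$, so preserving the signs of all minors of $A$ preserves the chirotope.

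The gap is the converse direction, which your sketch asserts but does not argue. You show each $B_j(\tau)$ remains an extreme ray of $C(\tau)$ and then claim the extreme rays of $C(\tau)$ are ``all covered by the evaluated columns of $B$''; that is precisely what needs proof. Moreover, your diagnosis of the missing piece points at the wrong invariant: you say the face lattice is encoded by ``zero entries of $AB$ together with rank data on submatrices of $A$,'' but the zero/nonzero pattern of minors (rank data) is strictly weaker than the chirotope. If for some $S$ the cofactor vector $z_S$ has $A_i z_S>0$ and $A_j z_S<0$, so that $\pm z_S$ is not a ray of $C$, preserving only which minors vanish would not prevent both scalar products from becoming positive after evaluation, producing a spurious ray of $C(\tau)$. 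Your set $\mathcal{F}$ does in fact preserve signs, since it contains the minors themselves rather than only their supports; but you must make the converse explicit: any extreme ray of $C(\tau)$ corresponds to some rank-$d$ row subset $S$, hence is $z_S(\tau)$ up to scale; sign preservation of the relevant minors then forces $Az_S\geq 0$ or $Az_S\leq 0$ over $\puiseux{\FF}{t}$, so $\pm z_S$ is a ray of $C$ and hence proportional to a column of $B$. This is exactly the role the paper's matrix $Z$ plays.
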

\begin{proof}
  First we show that an orthogonal basis of the lineality space $L$ evaluates to an orthogonal basis of the lineality space of $C(\tau)$.
  For this, consider two vectors $x,y \in\puiseux{\FF}{t}^{d+1}$ and pick $\tau$ large enough to avoid their poles and zeros.
  Then, the scalar product of $x$ and $y$ vanishes if and only if the scalar product of $x(\tau)$ and $y(\tau)$ does.
  Hence, the claim follows.

  Now we can assume that the polyhedral cone $C$ is pointed, i.e., it does not contain any linear subspace of positive dimension. 
  If this is not the case the subsequent argument applies to the quotient $C/L$. 
  
  Employing orthogonal bases, as for the lineality spaces above, shows that the evaluation maps the linear hull of $C$ to the linear hull of $C(\tau)$, preserving the dimension.
  So we may assume that $C$ is full-dimensional, as otherwise the arguments below hold in the linear hull of $C$.

  Let $\ell\leq\tbinom{m}{d}$ be the number of $d$-element sets of linearly independent rows of the matrix $A$.
  For each such set of rows the set of solutions to the corresponding homogeneous system of linear equations is a one-dimensional subspace of $\puiseux{\FF}{t}^{(d+1)}$.
  For each such system of homogeneous linear equations pick two non-zero solutions, which are negatives of each other.
  We arrive at $2\ell$ vectors in $\puiseux{\FF}{t}^{(d+1)}$ which we use to form the columns of the matrix $Z \in \puiseux{\FF}{t}^{(d+1)\times 2\ell}$.

  By the Farkas--Minkowski--Weyl theorem, we may assume that the columns of $B$ from \eqref{eq:B} only consist of the rays of $C$ and that the rays of $C$ form a subset of the columns of $Z$.
  In particular, the columns of $B$ occur in $Z$.
  Since the cone $C$ is pointed, the matrix $B$ contains at most one vector from each opposite pair of the columns of $Z$.
  This entails that $B$ has at most $\ell$ columns.

  Further, the real matrix $Z(\tau)$ contains all rays of $C(\tau)$ for each $\tau$ that avoids the poles of $A$ and $Z$.
  In the following, we want to show that those columns of $Z(\tau)$ which form the rays of $C(\tau)$ are exactly the columns of $B(\tau)$.

  We define $s(j,k)\in\puiseux{\FF}{t}$ to be the scalar product of the $j$th row of $A$ and the $k$th column of $Z$.
  The $m\cdot 2\ell$ signs of the scalar products $s(j,k)$, for $j\in[m]$ and $k\in[2\ell]$, form the \emph{chirotope} of the linear hyperplane arrangement defined by the rows of $A$ (in fact, due to taking two solutions for each homogenous system of linear equations, we duplicate the information of the chirotope).
  For almost all $\tau \in \FF$ evaluating the Puiseux fractions $s(j,k)$ at $\tau$ yields an element of $\FF$.
  For sufficiently large $\tau$ the sign of $s(j,k)$ agrees with its evaluation.
  This follows from the definition of the ordering on $\puiseux{\FF}{t}$, cf. \cite[Proposition, \S 1.3]{Jeroslow:1973b}.

  Let $\tau_0\in\FF$ be larger than all the at most finitely many poles of $A$ and $Z$.
  Further, let $\tau_0$ be large enough such that the chirotope of $A(\tau)$ agrees with the chirotope of $A$ for all $\tau>\tau_0$.

  By construction the rays of $C$ correspond to the non-negative columns of the chirotope whose support, given by the $0$ entries,
  is maximal with respect to inclusion; these are exactly the columns of $B$.  The corresponding columns of the chirotope of $A(\tau)$, for $\tau>\tau_0$, yield
  the rays of $C(\tau)$, which, hence, are the columns of $B(\tau)$.

  The same holds for the facets of $C$ and $C(\tau)$.
  The facets of $C$ correspond to the non-negative rows of the chirotope whose support, given by the $0$ entries, is maximal with respect to inclusion.

  Now the claim follows since the face lattice of a polyhedral cone is determined by the incidences between the facets and the rays.
\qed
\end{proof}

A statement related to Theorem~\ref{thm:evaluation} occurs in Benchimol's PhD thesis~\cite{Benchimol:2014}.
The Proposition 5.12 in \cite{Benchimol:2014} discusses the combinatorial structure of tropical polyhedra (arising as the feasible regions of tropical linear programs).
Yet here we consider the relationship between the combinatorial structure of Puiseux polyhedra and their evaluations over the reals.
As in the proof of \cite[Proposition 5.12]{Benchimol:2014} we could derive an explicit upper bound on the optimal $\tau_0$.
To this end one can estimate the coefficients of the Puiseux fractions in $Z$, which are given by determinantal expressions arising from submatrices of $A$.
Their poles and zeros are bounded by Cauchy bounds (e.g., see \cite[Thm. 8.1.3]{RahmanSchmeisser:2002}) depending on those coefficients.
We leave the details to the reader.

\medskip

A \emph{convex polyhedron} is the intersection of finitely many linear inequalities.  It is a called a \emph{polytope}
if it is bounded.  Restricting to cones allows a simple description in terms of homogeneous linear inequalities.
Yet this encompasses arbitrary polytopes and polyhedra, as they can equivalently be studied through their homogenizations.
In fact, all implementations in \polymake are based on this principle.
For further reading we refer to \cite[\S1.5]{Ziegler:1995}.
We visualize Theorem~\ref{thm:evaluation} with a very simple example.
\begin{example} \label{ex:param_triang}
  Consider the polytope $P$ in $\puiseux{\RR}{t}^2$ for large $t$ defined by the four inequalities
  \begin{align*}
    x_1, x_2 \geq 0, \qquad x_1 + x_2 \leq 3 , \qquad x_1 - x_2 \leq t \enspace .
  \end{align*}
  The evaluations at $\tau\in\{0,1,3\}$ are depicted in Figure~\ref{fig:param_triang}.  For $\tau = 0$ we obtain a
  triangle, for $\tau = 1$ a quadrangle and for $\tau \geq 3$ a triangle again.  The latter is the combinatorial type of
  the polytope $P$ over the field of Puiseux fractions with real coefficients.
\end{example}

\begin{figure}[bt]
  \centering

\newcommand{\scalefactor}{0.6}

\newcommand{\xmin}{-0.5}
\newcommand{\xmax}{4.5}
\newcommand{\ymin}{-2.5}
\newcommand{\ymax}{4.5}
\tikzset{Hyp/.style = {
    color = blue!50!black, thick
}}

\tikzset{fillPoly/.style = {
    color = blue!30
}}

\newcommand{\puiseuxtriangle}[1]{
\fill[fillPoly] (0,0) -- (#1, 0) -- (1.5+0.5*#1, 1.5- 0.5*#1) -- (0, 3);
\draw[Hyp] (\xmin, -0.5 - #1) -- (\xmax, \xmax - #1);
\draw[Hyp] (\xmin, 0)-- (\xmax, 0);
\draw[Hyp] (0, \ymin) -- (0, \ymax);
\draw[Hyp] (\xmin, 3-\xmin) -- (\xmax, 3-\xmax);

}

\begin{tikzpicture}[scale=\scalefactor]
\Gitter{\xmin}{\xmax}{\ymin}{\ymax}
\Koordinatenkreuz{\xmin-0.3}{\xmax+0.4}{\ymin-0.1}{\ymax+0.4}{$x_1$}{$x_2$}
\puiseuxtriangle{0};
\end{tikzpicture}
\qquad
\begin{tikzpicture}[scale=\scalefactor]
\Gitter{\xmin}{\xmax}{\ymin}{\ymax}
\Koordinatenkreuz{\xmin-0.3}{\xmax+0.4}{\ymin-0.1}{\ymax+0.4}{$x_1$}{$x_2$}
\puiseuxtriangle{1};
\end{tikzpicture}
\qquad
\begin{tikzpicture}[scale=\scalefactor]
\Gitter{\xmin}{\xmax}{\ymin}{\ymax}
\Koordinatenkreuz{\xmin-0.3}{\xmax+0.4}{\ymin-0.1}{\ymax+0.4}{$x_1$}{$x_2$}
\fill[fillPoly] (0,0) -- (3, 0) -- (1.5+0.5*3, 1.5- 0.5*3) -- (0, 3);
\draw[Hyp] (\xmin+1, \ymin) -- (\xmax, \xmax - 3);
\draw[Hyp] (\xmin, 0)-- (\xmax, 0);
\draw[Hyp] (0, \ymin) -- (0, \ymax);
\draw[Hyp] (\xmin, 3-\xmin) -- (\xmax, 3-\xmax);

\end{tikzpicture}
  \caption{Polygon depending on a real parameter as defined in Example~\ref{ex:param_triang}}
  \label{fig:param_triang}
\end{figure}

\begin{corollary}
  The set of combinatorial types of polyhedral cones which can be realized over $\puiseux{\FF}{t}$ is the same as over $\FF$.
\end{corollary}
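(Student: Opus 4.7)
The plan is to prove both inclusions separately, and both are short consequences of what has already been established.

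For the inclusion ``realizable over $\FF$ implies realizable over $\puiseux{\FF}{t}$'', I would simply note that $\FF$ embeds into $\puiseux{\FF}{t}$ as the constant Puiseux fractions. Hence any matrix $A\in\FF^{m\times(d+1)}$ can be regarded as a matrix over $\puiseux{\FF}{t}$, and the resulting cone has the same face lattice since solving the homogeneous system $A\cdot x\geq 0$ over an ordered field extension does not introduce any new rays or facets beyond those coming from the base field (the chirotope of $A$ is unchanged). Thus the combinatorial type is preserved under the inclusion of fields.

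For the reverse inclusion, I would invoke Theorem~\ref{thm:evaluation} directly. Given a polyhedral cone $C\subseteq\puiseux{\FF}{t}^{d+1}$ defined by $A\cdot x\geq 0$ with $A\in\puiseux{\FF}{t}^{m\times(d+1)}$, the theorem supplies a threshold $\tau_0\in\FF$ such that for every positive $\tau>\tau_0$ avoiding the (finitely many) poles of the coefficients, the evaluated cone $C(\tau)\subseteq\FF^{d+1}$ has the same combinatorial type as $C$. Since such a $\tau$ exists in $\FF$ (the set of admissible $\tau$ is cofinite in the positive part of $\FF$), the combinatorial type of $C$ is realized by the cone $C(\tau)$ over $\FF$.

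The only subtle point, and hence the main obstacle, is making sure that one can indeed choose a positive $\tau\in\FF$ strictly greater than $\tau_0$ that simultaneously avoids the finitely many poles of $A$; but this is automatic because $\FF$ is an ordered field, so its positive part is unbounded and infinite, while the set of poles is finite. Combining the two inclusions proves the corollary. \qed
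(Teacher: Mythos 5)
Your proof is correct and follows essentially the same route as the paper: the paper also observes that the inclusion $\FF\hookrightarrow\puiseux{\FF}{t}$ gives one direction (calling it ``trivial,'' where you supply the chirotope-invariance justification), and cites Theorem~\ref{thm:evaluation} for the other. The extra remark about choosing $\tau>\tau_0$ in $\FF$ is harmless but already built into the theorem's statement, since $\tau_0$ is taken larger than all poles.
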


\begin{proof}
  One inclusion is trivial since $\FF$ is a subfield of $\puiseux{\FF}{t}$.  The other inclusion follows from the
  preceding result.\qed
\end{proof}

For $A\in\puiseux{\FF}{t}^{m\times d}$, $b\in\puiseux{\FF}{t}^m$ and $c\in\puiseux{\FF}{t}^d$ we consider the linear
program $\LP(A,b,c)$ over $\puiseux{\FF}{t}$ which reads as
\begin{equation}\label{eq:LP}
  \begin{array}{r@{\quad}l}
    \text{maximize} & \transpose{c} \cdot x \\
    \text{subject to}
    &
    A \cdot x = b \,, \ x \geq 0 \enspace .
  \end{array}
\end{equation}
For each positive $\tau\in\FF$ (which avoids the poles of the Puiseux fractions which arise as coefficients) we obtain a
linear program $\LP(A(\tau),b(\tau),c(\tau))$ over $\FF$. Theorem~\ref{thm:evaluation} now has the following consequence
for parametric linear programming.

\begin{corollary}\label{cor:LP}
  Let $x^*\in\puiseux{\FF}{t}^d$ be an optimal solution to the LP~\eqref{eq:LP} with optimal value $v\in\puiseux{\FF}{t}$.
  Then there is a positive element $\tau_0\in\FF$ so that for every $\tau > \tau_0$ the vector $x^*(\tau)$ is an optimal
  solution for $\LP(A(\tau),b(\tau),c(\tau))$ with optimal value $v(\tau)$.
\end{corollary}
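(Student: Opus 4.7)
The plan is to reduce the statement to Theorem~\ref{thm:evaluation} by homogenizing the feasible polyhedron and then to control optimality through the finitely many vertex candidates. First I would homogenize: the feasible set $\{x \geq 0 : A x = b\}$ over $\puiseux{\FF}{t}$ is the $\{x_0 = 1\}$-slice of the polyhedral cone $C \subseteq \puiseux{\FF}{t}^{d+1}$ cut out by $A x = b x_0$ together with $x \geq 0$ and $x_0 \geq 0$. Theorem~\ref{thm:evaluation} supplies a threshold $\tau_1$ such that, for every $\tau > \tau_1$, evaluation sends the rays of $C$ bijectively to the rays of $C(\tau)$, so in particular the finitely many vertices $v_1,\dots,v_N$ of the Puiseux feasible polytope correspond bijectively to the vertices of the evaluated feasible polytope.

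Second, I would check feasibility of $x^*(\tau)$. The equality $A x^* = b$ evaluates to $A(\tau) x^*(\tau) = b(\tau)$ for every $\tau$ avoiding the finitely many poles that occur. Each inequality $x^*_j \geq 0$ either holds as an equality of Puiseux fractions, in which case it evaluates to equality, or the Puiseux fraction $x^*_j$ has a positive leading coefficient, in which case the sign-preservation statement for the ordering on $\puiseux{\FF}{t}$ invoked in the proof of Theorem~\ref{thm:evaluation} guarantees $x^*_j(\tau) \geq 0$ for all sufficiently large $\tau$.

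Third, I would verify optimality. Any optimal point of $\LP(A(\tau),b(\tau),c(\tau))$ is attained at some vertex of the evaluated feasible polytope, which by Step~1 is of the form $v_i(\tau)$ for some $i$. Since $x^*$ is optimal over $\puiseux{\FF}{t}$, we have $\transpose{c}(x^* - v_i) \geq 0$ in $\puiseux{\FF}{t}$ for every $i$; sign-preservation applied to these finitely many Puiseux fractions yields $\transpose{c(\tau)}\, x^*(\tau) \geq \transpose{c(\tau)}\, v_i(\tau)$ for all $\tau$ exceeding some $\tau_i$. Taking $\tau_0$ to be the maximum of $\tau_1$ and all the $\tau_i$, enlarged so as to exceed every pole arising among the entries of $A,b,c,x^*$ and the $v_i$, completes the argument; the identity $v(\tau) = \transpose{c(\tau)}\, x^*(\tau)$ is immediate from multiplicativity of evaluation.

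The main obstacle is ensuring that no vertex of the evaluated feasibility polytope appears that is not the evaluation of some $v_i$, since otherwise a spurious competitor to $x^*(\tau)$ could not be ruled out. This is precisely what Theorem~\ref{thm:evaluation}, applied to the homogenization above, provides. Everything else is a routine packaging of finitely many sign-preservation checks together with the pole-avoidance condition, so the corollary reduces cleanly to the combinatorial stability of the feasible cone established in the theorem.
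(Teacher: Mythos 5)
Your plan---homogenize, invoke Theorem~\ref{thm:evaluation} for combinatorial stability of the feasible cone, then transfer feasibility and finitely many optimality inequalities via sign-preservation---matches how the paper views the corollary: the text presents it as a consequence of Theorem~\ref{thm:evaluation} and otherwise simply attributes a direct proof to Jeroslow~\cite[\S2.3]{Jeroslow:1973b} (who controls signs of determinants). Your feasibility step and the vertex-by-vertex comparison of objective values are fine as far as they go.

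The gap is that you never exclude the possibility that $\LP(A(\tau),b(\tau),c(\tau))$ is \emph{unbounded}. You repeatedly write ``polytope,'' but the feasible region of \eqref{eq:LP} is a polyhedron which need not be bounded, and ``$x^*(\tau)$ dominates every vertex'' certifies optimality only if an optimum exists at all. Over $\puiseux{\FF}{t}$, the existence of the optimal $x^*$ guarantees $\transpose{c}\,r \le 0$ for every recession direction $r$ of the feasible region. To close your argument you must carry this through the evaluation: the extreme rays of the homogenization $C$ lying in $\{x_0=0\}$ are, by the same application of Theorem~\ref{thm:evaluation} (evaluation at large $\tau$ preserves whether the $x_0$-coordinate vanishes), mapped bijectively onto the extreme recession rays of the evaluated feasible region, and sign-preservation applied to the finitely many Puiseux fractions $\transpose{c}\,r$ then shows the evaluated objective is bounded above. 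With that observation added, your proof is correct and is essentially the intended deduction from Theorem~\ref{thm:evaluation}.
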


The above corollary was proved by Jeroslow~\cite[\S2.3]{Jeroslow:1973b}.  His argument, based on controlling signs of
determinants, is essentially a local version of our Theorem~\ref{thm:evaluation}.  Moreover, determining all the rays of
a polyhedral cone can be reduced to solving sufficiently many LPs.  This could also be exploited to derive another proof of
Theorem~\ref{thm:evaluation} from Corollary~\ref{cor:LP}.

\begin{remark}
  It is worth to mention the special case of a linear program over the field $\puiseux{\FF}{t}$, where the coordinates
  of the linear constraints, in fact, are elements of the field $\FF$ of coefficients, but the coordinates of the linear
  objective function are arbitrary elements in $\puiseux{\FF}{t}$.  That is, the feasible domain is a polyhedron, $P$,
  over~$\FF$.  Evaluating the objective function at some $\tau\in\FF$ makes one of the vertices of $P$ optimal.  Solving
  for all values of~$\tau$, in general, amounts to computing the entire normal fan of the polyhedron $P$.  This is
  equivalent to solving the dual convex hull problem over~$\FF$ for the given inequality description of $P$; see
  also~\cite{JCM:2008}.  Here we restrict our attention to solving parametric linear programs via Corollary~\ref{cor:LP}.
\end{remark}

The next example is a slight variation of a construction of Goldfarb and Sit~\cite{GoldfarbSit:1979}.  This is a class
of linear optimization problems on which certain versions of the simplex method perform poorly.

\begin{example} \label{ex:goldfarb_sit_cube}
  We fix $d > 1$ and pick a positive $\delta\leq \tfrac{1}{2}$ as well as a positive $\varepsilon < \tfrac{\delta}{2}$.
  Consider the linear program
  \[
  \begin{array}{r@{\quad}l}
    \text{maximize} & \, \sum_{i=1}^d \delta^{d-i} x_{i} \\[1ex]
    \text{subject to}
    &
    \begin{aligned}[t]
      0 &\leq x_1 \leq \varepsilon^{d-1} \\
      x_{j-1} &\leq \delta x_j \leq \varepsilon^{d-j} \delta- x_{j-1}  \qquad \text{ for } 2\leq j\leq d \enspace .
    \end{aligned}
  \end{array}
  \]
  The feasible region is combinatorially equivalent to the $d$-dimensional cube.  Applying the simplex method with the
  ``steepest edge'' pivoting strategy to this linear program with the origin as the start vertex visits all the $2^d$
  vertices.  Moreover, the vertex-edge graph with the orientation induced by the objective function is isomorphic to
  (the oriented vertex-edge graph of) the Klee--Minty cube~\cite{KleeMinty:1972}.  See Figure~\ref{fig:goldfarb_sit_cube}
  for a visualization of the $3$-dimensional case.

  We may interpret this linear program over the reals or over $\puiseux{(\puiseux{\RR}{\delta})}{\varepsilon}$, the
  field of Puiseux fractions in the indeterminate $\varepsilon$ with coefficients in the field $\puiseux{\RR}{\delta}$.
  This depends on whether we want to view $\delta$ and $\varepsilon$ as indeterminates or as real numbers.  Here we consider
  the ordering induced by the dual valuation $\val^*$, i.e., $\delta$ and $\varepsilon$ are \emph{small} infinitesimals,
  where $\varepsilon\ll\delta$.  Two more choices arise from considering $\varepsilon$ a constant in $\puiseux{\RR}{\delta}$
  or, conversely, $\delta$ a constant in $\puiseux{\RR}{\varepsilon}$.  Note that our constraints on $\delta$ and
  $\varepsilon$ are feasible in all four cases.
\end{example}


\begin{figure}[bt]
  \centering
\newcommand{\ptau}{0.75}
\newcommand{\pdelta}{2}
\tikzset{
    halfarrow/.style={postaction={decorate},
        decoration={markings,mark=at position .85 with
       {\arrow{stealth}}}}}
\begin{tikzpicture}[x  = {(1cm,0cm)},
                    y  = {(0cm,1cm)},
                    z  = {(0.3cm,0.6cm)},
                    scale = 3,
                    color = {black}]

  \coordinate (v_0) at (0,0,0);
  \coordinate (v_1) at (\ptau^2, \ptau^2/\pdelta , \ptau^2/\pdelta^2); 
  \coordinate (v_2) at (\ptau^2, \ptau-\ptau^2/\pdelta, \ptau/\pdelta-\ptau^2/\pdelta^2);
  \coordinate (v_3) at (0, \ptau,\ptau/\pdelta); 
  \coordinate (v_4) at (0, \ptau, 1-\ptau/\pdelta); 
  \coordinate (v_5) at (\ptau^2, \ptau-\ptau^2/\pdelta, 1-\ptau/\pdelta+\ptau^2/\pdelta^2); 
  \coordinate (v_6) at (\ptau^2, \ptau^2/\pdelta, 1-\ptau^2/\pdelta^2); 
  \coordinate (v_7) at (0,0,1);

 \draw (v_7) -- (v_0) -- (v_3);
 \draw (v_1) -- (v_6);
 \draw (v_2) -- (v_5);
 \draw (v_4) -- (v_7);

 \draw[ultra thick, halfarrow, blue!70!black] (v_6) -- (v_7);
 \draw[preaction={draw=white, line width=5pt}] (v_2) -- (v_5);
 \draw[ultra thick, blue!70!black] (v_5) -- (v_6);
 \draw[ultra thick, halfarrow, blue!70!black] (v_4) -- (v_5);
 \draw[ultra thick, blue!70!black] (v_3) -- (v_4);
 \draw[preaction={draw=white, line width=5pt},ultra thick, halfarrow, blue!70!black] (v_2) -- (v_3);
 \draw[ultra thick, blue!70!black] (v_1) -- (v_2);
 \draw[ultra thick, halfarrow, blue!70!black] (v_0) -- (v_1);

 \foreach \i in {0,1,...,7} {
  \fill[blue!70!black] (v_\i) circle (0.7pt);
 }

\end{tikzpicture}
  \caption{The $3$-dimensional Goldfarb--Sit cube.}
  \label{fig:goldfarb_sit_cube}
\end{figure}
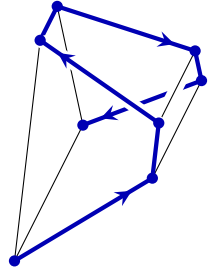

\newlength{\mytemplen}
\newcommand{\mybackup}[1]
{
  \settowidth{\mytemplen}{\(\displaystyle #1\)}
  \hskip-\mytemplen%
  \mkern-8mu
}

Our third and last example is a class of linear programs occurring in~\cite{ABGJ:1405.4161}.  For these the central path
of the interior point method with a logarithmic barrier function has a total curvature which is exponential as a function
of the dimension.

\begin{example} \label{ex:long_polytope}
  Given a positive integer $r$, we define a linear program over the field $\puiseux{\QQ}{t}$ (with the primal valuation) in the $2r+2$ variables
  $u_0, v_0, u_1, v_1, \dots, u_r, v_r$ as follows:
  \[
  \begin{array}{r@{\quad}l}
    \text{minimize} & \,  v_0 \\[1ex]
    \text{subject to} 
    &
    \begin{aligned}[t]
      u_0 & \leq t \, , \  v_0 \leq t^2 \\
      & \left.\mybackup{ u_i}\begin{aligned}
          u_i & \leq t  u_{i-1} \, , \  u_i \leq t v_{i-1} \vphantom{t^{1 - \frac{1}{2}}} \\
          v_i & \leq t^{1 - \frac{1}{2^{i}}} ( u_{i-1} +  v_{i-1}) 
        \end{aligned}\; \right\} \qquad \text{for} \ 1\leq i\leq r\\
      u_r & \geq 0 \, , \   v_r \geq 0 \, . \vphantom{t^{1 - \frac{1}{2}}}  
    \end{aligned}
  \end{array}
  \]

  Here it would be interesting to know the exact value for the optimal $\tau_0$ in Theorem~\ref{thm:evaluation}, as a
  function of $r$.  Experimentally, based on the method described below, we found $\tau_0=1$ for $r=1$ and $\tau_0=2^{2^{r-1}}$ for $r$ at most $5$.
  We conjecture the latter to be the true bound in general.
\end{example}

To find the optimal bound for a given constraint matrix $A$ we can use the following method.
One can solve the dual convex hull problem for the cone $C$, which is the feasible region in homogenized form, to obtain a matrix $B$ whose columns are the rays of $C$.
This also yields a submatrix of $A$ corresponding to the rows which define facets of $C$.
Without loss of generality we may assume that that submatrix is $A$ itself.
Let $\tau_0$ be the largest zero or pole of any (Puiseux fraction) entry of the matrix $A \cdot B$.
Then for every value $\tau > \tau_0$ the sign patterns of $(A \cdot B)(\tau)$ and $A \cdot B$ coincide, and so do the combinatorial types of $C$ and $C(\tau)$.
Determining the zeros and poles of a Puiseux fraction amounts to factorizing univariate polynomials.


\section{Tropical Dual Convex Hulls}\label{sec:tropical}
\noindent
Tropical geometry is the study of the piecewise linear images of algebraic varieties, defined over a field with a
non-Archimedean valuation, under the valuation map; see~\cite{Tropical+Book} for an overview.  The motivation for
research in this area comes from at least two different directions.  First, tropical varieties still retain a lot of
interesting information about their classical counterparts.  Therefore, passing to the tropical limit opens up a path
for combinatorial algorithms to be applied to topics in algebraic geometry.  Second, the algebraic geometry perspective
offers opportunities for optimization and computational geometry.  Here we will discuss how classical convex hull
algorithms over fields of Puiseux fractions can be applied to compute tropical convex hulls; see
\cite{Tropical+Convex+Hull+Computations} for a survey on the subject;
a standard algorithm is the tropical double description method of \cite{AllamigeonGaubertGoubault:2010}.

The \emph{tropical semiring} $\TT$ consists of the set $\RR\cup\{-\infty\}$ together with $u \oplus v = \max(u,v)$ as
the addition and $u \odot v = u+v$ as the multiplication.  Extending these operations to vectors turns $\TT^{d+1}$ into
a semimodule.  A \emph{tropical cone} is the sub-semimodule 
\[\tcone(G) = \SetOf{\lambda_1 \odot g_1 \oplus \cdots \oplus \lambda_n \odot g_{n}}{\lambda_1,\ldots,\lambda_{n} \in \TT}\]
generated from the columns $g_1,\dots,g_n$ of the matrix
$G \in \TT^{(d+1)\times n}$.  Similar to classical cones, tropical cones admit an exterior
description~\cite{GaubertKatz:2011}.  It is known that every tropical cone is the image of a classical cone under the
valuation map $\val \colon \pseries{\RR}{t} \rightarrow \TT$; see \cite{DevelinYu:2007}.  Based on this idea, we present
an algorithm for computing generators of a tropical cone from a description in terms of tropical linear inequalities;
see Algorithm~\ref{algo:halfToGen} below.

Before we can start to describe that algorithm we first need to discuss matters of general position in the tropical
setting.  The \emph{tropical determinant} of a square matrix $U \in \TT^{\ell\times \ell}$ is given by
\begin{equation}\label{eq:tdet}
  \tdet(U) \ = \ \bigoplus_{\sigma \in S_{\ell}} u_{1\pi(1)} \odot \cdots \odot u_{\ell\pi(\ell)} \enspace.
\end{equation}
Here $S_\ell$ is the symmetric group of degree $\ell$; computing the tropical determinant is the same as solving a
linear assignment optimization problem. Consider a pair of matrices $H^+, H^- \in \TT^{m\times(d+1)}$ which
serve as an exterior description of the tropical cone
\begin{equation}\label{eq:Q}
  Q \ = \ \SetOf{z \in \TT^{(d+1)}}{H^+\odot z \geq H^- \odot z} \enspace .
\end{equation}
In contrast to the classical situation we have to take two matrices into account.  This is due to the lack of an
additive inverse operation.  We will assume that $\mu(i,j):=\min(H^+_{ij}, H^-_{ij}) = -\infty$ for any pair $(i,j) \in
[m]\times[d+1]$, i.e., for each coordinate position at most one of the corresponding entries in the two matrices is
finite.  Then we can define
\[
\chi(i,j) \ := \begin{cases} 1 & \text{if $\mu(i,j)=H^+_{ij}\neq-\infty$}\\ -1 & \text{if $\mu(i,j)=H^-_{ij}\neq-\infty$}
  \\ 0 & \text{otherwise} \enspace . \end{cases}
\]
For each term $u_{1\pi(1)} \odot \cdots \odot u_{\ell\pi(\ell)}$ in \eqref{eq:tdet} we define its \emph{sign} as
\[
\sgn(\pi) \cdot \chi(1,\pi(1)) \cdots \chi(\ell,\pi(\ell)) \enspace ,
\]
where $\sgn(\pi)$ is the sign of the permutation $\pi$.  Now the exterior description \eqref{eq:Q} of the tropical cone
$Q$ is \emph{tropically sign-generic} if for each square submatrix $U$ of $H^+ \oplus H^-$ we have $\tdet(U)\neq-\infty$
and, moreover, the signs of all terms $u_{1\pi(1)} \odot \cdots \odot u_{\ell\pi(\ell)}$ which attain the maximum in
\eqref{eq:tdet} agree.  By looking at $1{\times}1$-submatrices $U$ we see that in this case all coefficients of the matrix
$H^+\oplus H^-$ are finite and thus $\chi(i,j)$ is never $0$.

\medskip

\begin{algorithm}[H]
 \caption{A dual tropical convex hull algorithm}
 \label{algo:halfToGen}
  \dontprintsemicolon

  \KwIn{pair of matrices $H^+, H^- \in \TT^{m\times(d+1)}$ which provide a tropically sign-generic exterior description of
    the tropical cone $Q$ from \eqref{eq:Q}}
  \KwOut{generators for $Q$}

  pick two matrices $A^+, A^- \in \pseries{\RR}{t}^{m\times (d+1)}$ with strictly positive entries such that $\val(A^+)
  = H^+$ and $\val(A^-) = H^-$ \;

  apply a classical dual convex hull algorithm to determine a matrix $B \in \pseries{\RR}{t}^{(d+1)\times n}$ such that
  $\SetOf{B\cdot a}{a \in \pseries{\RR}{t}^{n}, \, a \geq 0} = \SetOf{x \in \pseries{\RR}{t}^{(d+1)}}{(A^+-A^-)\cdot x\geq0, \, x \geq 0}$ \;

  \Return $\val(B)$ \;

\end{algorithm}

\begin{proof}[Correctness of  Algorithm~\ref{algo:halfToGen}]
  The main lemma of tropical linear programming \cite[Theorem~16]{AlBeGaJo:2015} says the following.  In the tropically
  sign-generic case, an exterior description of a tropical cone can be obtained from an exterior description of a
  classical cone over Puiseux series by applying the valuation map to the constraint matrix coefficient-wise.  This
  statement assumes that the classical cone is contained in the non-negative orthant.  We infer that
  \[
  \begin{aligned}
    Q \ &= \ \SetOf{z \in \TT^{m\times(d+1)}}{H^+ \odot z \geq H^- \odot z}\\ 
    &= \ \val\left(\SetOf{x \in \pseries{\RR}{t}^{m\times(d+1)} }{A^+\cdot x \geq A^- \cdot x, x \geq 0}\right) \\
    &= \ \val\left(\SetOf{\vphantom{\bigl(} B\cdot a}{a \in \pseries{\RR}{t}^{n}, x \geq 0}\right) \enspace .
  \end{aligned}
  \]
  Now \cite[Proposition 2.1]{DevelinYu:2007} yields $Q = \val(\SetOf{B\cdot a}{a \in \pseries{\RR}{t}^{n}, x \geq 0}) =
  \tcone(\val(B))$.  This ends the proof.
  \qed
\end{proof}

The correctness of our algorithm is not guaranteed if the genericity condition is not satisfied.
The crucial properties of the lifted matrices $A^+, A^-$ are not necessarily fulfilled.
It is an open question of how an exterior description over $\TT$ is related to an exterior description over $\pseries{\RR}{t}$ in the general setting.
We are even lacking a convincing concept for the ``facets'' of a general tropical cone.

\section{Implementation}\label{sec:implementation}
\noindent
As a key feature the \polymake system is designed as a \perl/\cplusplus hybrid, that is, both programming languages are
used in the implementation and also both programming languages can be employed by the user to write further code.  One
main advantage of Perl is the fact that it is interpreted; this makes it suitable as the main front end for the user.
Further, Perl has its strengths in the manipulation of strings and file processing.  \cplusplus on the other hand is a
compiled language with a powerful template mechanism which allows to write very abstract code which, nonetheless, is
executed very fast.  Our implementation, in \cplusplus, makes extensive use of these features.  The implementation of
the dual steepest edge simplex method, contributed by Thomas Opfer, and the beneath-beyond method for computing convex
hulls (see \cite{Edelsbrunner:1987} and \cite{Joswig:2003}) are templated.  Therefore \polymake can handle both
computations for arbitrary number field types which encode elements in an ordered field.

Based on this mechanism we implemented the type \texttt{RationalFunction} which depends on two generic template types for
coefficients and exponents.  Note that the field of coefficients here does not have to be ordered.
Our proof-of-concept implementation employs the classical Euclidean GCD
algorithm for normalization.  Currently the numerator and the denominator are chosen coprime such that the denominator
is normalized with leading coefficient one.  For the most interesting case $\FF=\QQ$ it is known that the coefficients
of the intermediate polynomials can grow quite badly, e.g., see \cite[Example~1]{vonzurGathenGerhard:2003}.  Therefore,
as expected, this is the bottleneck of our implementation.  In a number field or in a field with a non-Archimedean valuation 
the most natural choice for a normalization is to pick the elements of the ring of integers as coefficients.  The reason 
for our choice is that this more generic design does not make any assumption on the field of coefficients.
This makes it very versatile, and it fits the overall programming style in \polymake.  A fast specialization to the rational
coefficient case could be based on \cite[Algorithm~11.4]{vonzurGathenGerhard:2003}.  This is left for a future version.

The \polymake implementation of Puiseux fractions $\puiseux{\FF}{t}$ closely follows the construction described in
Section~\ref{sec:puiseux}. The new number type is derived from \texttt{RationalFunction} with overloaded comparison
operators and new features such as evaluating and converting into \texttt{TropicalNumber}. An extra template parameter
\texttt{MinMax} allows to choose whether the indeterminate $t$ is a small or a large infinitesimal.

There are other implementations of Puiseux series arithmetic, e.g., in \texttt{Magma} \cite{Magma} or
\texttt{MATLAB}~\cite{Matlab}.  However, they seem to work with finite truncations of Puiseux series and floating-point
coefficients.  This does not allow for exact computations of the kind we are interested in.

\section{Computations}\label{sec:computations}
\noindent
We briefly show how our \polymake implementation can be used.  Further, we report on timings for our LP solver, tested
on the Goldfarb--Sit cubes from Example~\ref{ex:goldfarb_sit_cube}, and for our (dual) convex hull code, tested on the
polytopes with a ``long and winded'' central path from Example~\ref{ex:long_polytope}.

\subsection{Using \polymake}
The following code defines a $3$-dimensional Goldfarb--Sit cube over the field $\puiseux{\QQ}{t}$, see
Example~\ref{ex:goldfarb_sit_cube}.  We use the parameters $\varepsilon = t$ and $\delta = \tfrac{1}{2}$.  The template
parameter \texttt{Min} indicates that the ordering is induced by the dual valuation $\val^*$, and hence the
indeterminate $t$ plays the role of a small infinitesimal.
\begin{lstlisting}
polytope > $monomial=new UniMonomial<Rational,Rational>(1);
polytope > $t=new PuiseuxFraction<Min>($monomial);
polytope > $p=goldfarb_sit(3,2*$t,1/2); 
\end{lstlisting}
The polytope object, stored in the variable \verb+$p+, is generated with a facet description from which further
properties will be derived below.  It is already equipped with a \texttt{LinearProgram} subobject encoding the
objective function from Example~\ref{ex:goldfarb_sit_cube}. The following lines show the maximal value and corresponding
vertex of this linear program as well as the vertices derived from the outer description. Below, we present timings for
such calculations.
\enlargethispage{1em}
\begin{lstlisting}
polytope > print $p->LP->MAXIMAL_VALUE;
(1)
polytope > print $p->LP->MAXIMAL_VERTEX;
(1) (0) (0) (1)
polytope > print $p->VERTICES;
(1) (0) (0) (0)
(1) (t^2) (2*t^2) (4*t^2)
(1) (0) (t) (2*t)
(1) (t^2) (t -2*t^2) (2*t -4*t^2)
(1) (0) (0) (1)
(1) (t^2) (2*t^2) (1 -4*t^2)
(1) (0) (t) (1 -2*t)
(1) (t^2) (t -2*t^2) (1 -2*t + 4*t^2)
\end{lstlisting}
As an additional benefit of our implementation we get numerous other properties for free.  For instance, we can compute
the parameterized volume, which is a polynomial in $t$.
\begin{lstlisting}
polytope > print $p->VOLUME;
(t^3 -4*t^4 + 4*t^5)
\end{lstlisting}
That polynomial, as an element of the field of Puiseux fractions, has a valuation, and we can evaluate it at the
rational number $\tfrac{1}{12}$.
\begin{lstlisting}
polytope > print $p->VOLUME->val;
3
polytope > print $p->VOLUME->evaluate(1/12);
25/62208
\end{lstlisting}

\subsection{Linear programs}
We have tested our implementation by computing the linear program of Example~\ref{ex:goldfarb_sit_cube} with polyhedra
defined over Puiseux fractions.

The simplex method in \polymake is an implementation of a (dual) simplex with a (dual) steepest edge pricing.  We set up
the experiment to make sure our Goldfarb--Sit cube LPs behave as badly as possible.  That is, we force our
implementation to visit all $n=2^d$ vertices, when $d$ is the dimension of the input.  Table~\ref{tab:goldfarb_sit}
illustrates the expected exponential growth of the execution time of the linear program.  In three of our four
experiments we choose $\delta$ as $\tfrac{1}{2}$.  The computation over $\puiseux{\QQ}{\varepsilon}$ costs a factor of
about $80$ in time, compared with the rational cubes for a modest $\varepsilon = \tfrac{1}{6}$.  However, taking a small
$\varepsilon$ whose binary encoding takes more than $18,\! 000$ bits is substantially more expensive than the computations
over the field $\puiseux{\QQ}{\varepsilon}$ of Puiseux fractions.  Taking $\delta$ as a second small infinitesimal is
possible but prohibitively expensive for dimensions larger than twelve.

\begin{table}[th]
\centering
\caption{Timings (in seconds) for the Goldfarb--Sit cubes of dimension~$d$ with $\delta=\tfrac{1}{2}$.  For $\varepsilon$
  we tried a small infinitesimal as well as two rational numbers, one with a short binary encoding and another one whose
  encoding is fairly large.  For comparison we also tried both parameters as indeterminates.} 
\label{tab:goldfarb_sit}
\renewcommand{\arraystretch}{0.9}
\begin{tabular*}{0.75\linewidth}{@{\extracolsep{\fill}}rrr@{\hspace{1cm}}d{3}d{3}d{3}d{3}@{}}
      \toprule
   $d$ & $m$ & $n$  & \multicolumn{1}{c}{$\puiseux{\QQ}{\varepsilon}$} & \multicolumn{1}{c}{$\QQ$} & \multicolumn{1}{c}{$\QQ$} & \multicolumn{1}{c}{ $(\puiseux{\QQ}{\delta})\{\varepsilon\}$ } \\
       &     &      & \multicolumn{1}{c}{$\varepsilon$}  & \multicolumn{1}{c}{$\varepsilon=\frac{1}{6}$} & \multicolumn{1}{c}{$\varepsilon=\tfrac{2}{17^{4500}}$} & \multicolumn{1}{c}{$\varepsilon\ll\delta$} \\
      \midrule
    3 &  6 &    8 &  0.010 & 0.003 &   0.005 &    0.101  \\
    4 &  8 &   16 &  0.026 & 0.001 &   0.017 &    0.353  \\
    5 & 10 &   32 &  0.064 & 0.002 &   0.065 &    1.034  \\
    6 & 12 &   64 &  0.157 & 0.007 &   0.253 &    2.877  \\
    7 & 14 &  128 &  0.368 & 0.006 &   0.829 &    7.588  \\
    8 & 16 &  256 &  0.843 & 0.016 &   2.643 &   19.226  \\
    9 & 18 &  512 &  1.906 & 0.039 &   7.703 &   47.806  \\
   10 & 20 & 1024 &  4.258 & 0.090 &  21.908 &  118.106  \\
   11 & 22 & 2048 &  9.383 & 0.191 &  59.981 &  287.249  \\
   12 & 24 & 4096 & 20.583 & 0.418 & 160.894 &  687.052  \\
    \bottomrule
  \end{tabular*}
\end{table}

\subsection{Convex hulls}
We have also tested our implementation by computing the vertices of the polytope from
Example~\ref{ex:long_polytope}. For this we used the client \texttt{long\_and\_winding} which creates the $d =
(2r+2)$-dimensional polytope given by $m= 3r+4$ facet-defining inequalities.  Over the rationals we evaluated the
inequalities at $2^{2^r}$ which probably gives the correct combinatorics; see the discussion at the end of
Example~\ref{ex:long_polytope}.  This very choice forces the coordinates of the defining inequalities to be integral, such
that the polytope is rational.  The number of vertices $n$ is derived from that rational polytope.
The running times grow quite dramatically for the parametric input.
This overhead could be reduced via a better implementation of the Puiseux fraction arithmetic.
\begin{table}[th]
\centering
\caption{Timings (in seconds) for convex hull computation of the feasibility set from Example~\ref{ex:long_polytope}. All timings represent an average over ten iterations. If any test exceeded a one hour time limit this and all larger instances of the experiment were skipped and marked $-$.}
\label{tab:ch}
\renewcommand{\arraystretch}{0.9}
\begin{tabular*}{0.5\linewidth}{@{\extracolsep{\fill}}r@{\hspace{1cm}}rrr@{\hspace{1cm}}d{3}d{3}@{}}
      \toprule
    $r$ & $d$ & $m$ & $n$  & \multicolumn{1}{c}{$\puiseux{\QQ}{t}$} & \multicolumn{1}{c}{$\QQ$} \\
      \midrule
    1 &  4 &   7  &     11 &     0.018  &    0.000 \\
    2 &  6 &  10  &     28 &     0.111  &    0.000 \\
    3 &  8 &  13  &     71 &     0.754  &    0.010 \\
    4 & 10 &  16  &    182 &    15.445  &    0.036 \\
    5 & 12 &  19  &    471 &  1603.051  &    0.150 \\
    6 & 14 &  22  &   1226 &     -      &    0.737 \\
    7 & 16 &  25  &   3201 &     -      &    4.001 \\
    8 & 18 &  28  &   8370 &     -      &   25.093 \\
    9 & 20 &  31  &  21901 &     -      &  223.240 \\
   10 & 22 &  34  &  57324 &     -      & 1891.133 \\
    \bottomrule
  \end{tabular*}
\end{table}

\goodbreak

\subsection{Experimental setup}
Everything was calculated on the same Linux machine with \polymake perpetual beta version 2.15-beta3 which includes the new number type, the templated simplex algorithm and the templated beneath-and-beyond convex hull algorithm.
All timings were measured in CPU seconds and averaged over ten iterations. The simplex algorithm was set to use only one thread.

All tests were done on \texttt{openSUSE}~13.1 (x86\_{}64), with Linux kernel~3.11.10-25, \texttt{clang}~3.3 and \texttt{perl}~5.18.1.
The rational numbers use a \cplusplus-wrapper around the \texttt{GMP} library version 5.1.2. As memory allocator \polymake uses the \texttt{pool\_allocator} from \texttt{libstdc++}, which was version 4.8.1 for the experiments.

The hardware for all tests was:
\begin{quote}
  Intel(R) Core(TM) i7-3930K CPU @ 3.20GHz\\
  bogomips: 6400.21\\
  MemTotal: 32928276 kB
\end{quote}

\bibliographystyle{splncs03}
\bibliography{LPCHRF}

\end{document}